\documentclass[11pt,leqno]{article}
\usepackage[margin=1in]{geometry} 
\usepackage{amssymb,amsfonts,amsmath,bbm,mathrsfs,stmaryrd,mathtools}
\usepackage{xcolor}
\usepackage{url}

\usepackage{graphicx}

\usepackage{accents}

\usepackage{extarrows}

\usepackage[shortlabels]{enumitem}
\usepackage{tensor}

\usepackage{xr}
\usepackage[T1]{fontenc}
\usepackage[utf8]{inputenc}
\usepackage[colorlinks,
linkcolor=black!75!red,
citecolor=blue,
pdftitle={},
pdfproducer={pdfLaTeX},
pdfpagemode=None,
bookmarksopen=true,
bookmarksnumbered=true,
backref=page]{hyperref}

\usepackage{tikz}
\usetikzlibrary{arrows,calc,decorations.pathreplacing,decorations.markings,decorations.shapes,intersections,shapes.geometric,through,fit,shapes.symbols,positioning,decorations.pathmorphing}

\makeatletter
\newlength\zig@L
\newlength\zig@La
\newlength\zig@Lb

\newcommand{\xzigrightarrow}[2][]{%
  \mathrel{%
    \settowidth{\zig@La}{$\scriptstyle #2$}%
    \settowidth{\zig@Lb}{$\scriptstyle #1$}%
    \zig@L=\zig@La\relax
    \ifdim\zig@Lb>\zig@L \zig@L=\zig@Lb\fi
    \advance\zig@L by 2.2em\relax
    \tikz[baseline=-0.65ex]{%
      \draw[->,
            line cap=round,
            decorate,
            decoration={zigzag,segment length=4pt,amplitude=1.1pt}]%
        (0,0) -- (\zig@L,0)
        node[midway,above=2pt] {$\scriptstyle #2$}%
        \if\relax\detokenize{#1}\relax\else
          node[midway,below=2pt] {$\scriptstyle #1$}%
        \fi
      ;
    }%
  }%
}
\makeatother

\makeatletter
\newcommand{\squigjoin}{1mu} % tune this: 0.5mu, 1mu, 1.5mu, ...

\def\sqleft@{\sim}                    % no overlap here
\def\sqmid@{\sim\mkern-\squigjoin}    % overlap only between repeated mids

\def\rightsquigarrowfill@{%
  \arrowfill@{\sqleft@}{\sqmid@}{\mkern-4mu\succ}%
}

\newcommand{\xrightsquigarrow}[2][]{%
  \ext@arrow 0359\rightsquigarrowfill@{#1}{#2}%
}
\makeatother

\makeatletter
\newcommand*\circled[1]{\tikz[baseline=(char.base)]{
    \node[shape=circle, draw, inner sep=0pt, 
    minimum height={\f@size},] (char) {\vphantom{WAH1g}#1};}}
\makeatother

\makeatletter
\DeclareRobustCommand\widecheck[1]{{\mathpalette\@widecheck{#1}}}
\def\@widecheck#1#2{%
    \setbox\z@\hbox{\m@th$#1#2$}%
    \setbox\tw@\hbox{\m@th$#1%
       \widehat{%
          \vrule\@width\z@\@height\ht\z@
          \vrule\@height\z@\@width\wd\z@}$}%
    \dp\tw@-\ht\z@
    \@tempdima\ht\z@ \advance\@tempdima2\ht\tw@ \divide\@tempdima\thr@@
    \setbox\tw@\hbox{%
       \raise\@tempdima\hbox{\scalebox{1}[-1]{\lower\@tempdima\box
\tw@}}}%
    {\ooalign{\box\tw@ \cr \box\z@}}}
\makeatother

\usepackage{braket}

\usepackage[amsmath,thmmarks,hyperref]{ntheorem}
\usepackage{cleveref}

\newcommand\nthalias[1]{\AddToHook{env/#1/begin}{\crefalias{lemma}{#1}}}

\nthalias{assumption}
\nthalias{conjecture}
\nthalias{construction}
\nthalias{convention}
\nthalias{corollaryN}
\nthalias{corollary}
\nthalias{definition}
\nthalias{examples}
\nthalias{example}
\nthalias{lemma}
\nthalias{notation}
\nthalias{propositionN}
\nthalias{proposition}
\nthalias{question}
\nthalias{recollection}
\nthalias{remarks}
\nthalias{remark}
\nthalias{sketch}
\nthalias{theoremN}
\nthalias{theorem}

\creflabelformat{enumi}{#2#1#3}

\crefname{section}{Section}{Sections}
\crefformat{section}{#2Section~#1#3} 
\Crefformat{section}{#2Section~#1#3} 

\crefname{subsection}{\S}{\S\S}
\AtBeginDocument{%
  \crefformat{subsection}{#2\S#1#3}%
  \Crefformat{subsection}{#2\S#1#3}% 
}

\crefname{subsubsection}{\S}{\S\S}
\AtBeginDocument{%
  \crefformat{subsubsection}{#2\S#1#3}%
  \Crefformat{subsubsection}{#2\S#1#3}% 
}

\theoremstyle{plain}

\newtheorem{lemma}{Lemma}[section]
\newtheorem{proposition}[lemma]{Proposition}
\newtheorem{corollary}[lemma]{Corollary}
\newtheorem{theorem}[lemma]{Theorem}

\theoremstyle{plain}
\theoremnumbering{Alph}

\theoremstyle{plain}
\theorembodyfont{\upshape}
\theoremsymbol{\ensuremath{\blacklozenge}}

\newtheorem{definition}[lemma]{Definition}

\newtheorem{remark}[lemma]{Remark}
\newtheorem{remarks}[lemma]{Remarks}

\newtheorem{notation}[lemma]{Notation}

\newtheorem{recollection}[lemma]{Recollection}

\crefname{definition}{definition}{definitions}
\crefformat{definition}{#2definition~#1#3} 
\Crefformat{definition}{#2Definition~#1#3} 

\crefname{ex}{example}{examples}
\crefformat{example}{#2example~#1#3} 
\Crefformat{example}{#2Example~#1#3} 

\crefname{exs}{example}{examples}
\crefformat{examples}{#2example~#1#3} 
\Crefformat{examples}{#2Example~#1#3} 

\crefname{remark}{remark}{remarks}
\crefformat{remark}{#2remark~#1#3} 
\Crefformat{remark}{#2Remark~#1#3} 

\crefname{remarks}{remark}{remarks}
\crefformat{remarks}{#2remark~#1#3} 
\Crefformat{remarks}{#2Remark~#1#3} 

\crefname{convention}{convention}{conventions}
\crefformat{convention}{#2convention~#1#3} 
\Crefformat{convention}{#2Convention~#1#3} 

\crefname{notation}{notation}{notations}
\crefformat{notation}{#2notation~#1#3} 
\Crefformat{notation}{#2Notation~#1#3} 

\crefname{table}{table}{tables}
\crefformat{table}{#2table~#1#3} 
\Crefformat{table}{#2Table~#1#3}

\crefname{lemma}{lemma}{lemmas}
\crefformat{lemma}{#2lemma~#1#3} 
\Crefformat{lemma}{#2Lemma~#1#3} 

\crefname{proposition}{proposition}{propositions}
\crefformat{proposition}{#2proposition~#1#3} 
\Crefformat{proposition}{#2Proposition~#1#3} 

\crefname{propositionN}{proposition}{propositions}
\crefformat{propositionN}{#2proposition~#1#3} 
\Crefformat{propositionN}{#2Proposition~#1#3} 

\crefname{corollary}{corollary}{corollaries}
\crefformat{corollary}{#2corollary~#1#3} 
\Crefformat{corollary}{#2Corollary~#1#3} 

\crefname{corollaryN}{corollary}{corollaries}
\crefformat{corollaryN}{#2corollary~#1#3} 
\Crefformat{corollaryN}{#2Corollary~#1#3} 

\crefname{theorem}{theorem}{theorems}
\crefformat{theorem}{#2theorem~#1#3} 
\Crefformat{theorem}{#2Theorem~#1#3} 

\crefname{theoremN}{theorem}{theorems}
\crefformat{theoremN}{#2theorem~#1#3} 
\Crefformat{theoremN}{#2Theorem~#1#3} 

\crefname{enumi}{}{}
\crefformat{enumi}{#2#1#3}
\Crefformat{enumi}{#2#1#3}

\crefname{assumption}{assumption}{Assumptions}
\crefformat{assumption}{#2assumption~#1#3} 
\Crefformat{assumption}{#2Assumption~#1#3} 

\crefname{construction}{construction}{Constructions}
\crefformat{construction}{#2construction~#1#3} 
\Crefformat{construction}{#2Construction~#1#3} 

\crefname{sketch}{sketch}{Sketches}
\crefformat{sketch}{#2sketch~#1#3} 
\Crefformat{sketch}{#2Sketch~#1#3} 

\crefname{recollection}{recollection}{Recollections}
\crefformat{recollection}{#2recollection~#1#3} 
\Crefformat{recollection}{#2Recollection~#1#3} 

\crefname{question}{question}{Questions}
\crefformat{question}{#2question~#1#3} 
\Crefformat{question}{#2Question~#1#3} 

\crefname{equation}{}{}
\crefformat{equation}{(#2#1#3)} 
\Crefformat{equation}{(#2#1#3)}

\numberwithin{equation}{section}

\theoremstyle{nonumberplain}
\theoremsymbol{\ensuremath{\blacksquare}}

\newtheorem{proof}{Proof}
\newcommand\pf[1]{\newtheorem{#1}{Proof of \Cref{#1}}}

\newcommand\bC{{\mathbb C}}
\newcommand\bD{{\mathbb D}}

\newcommand\bQ{{\mathbb Q}}
\newcommand\bR{{\mathbb R}}
\newcommand\bS{{\mathbb S}}
\newcommand\bT{{\mathbb T}}

\newcommand\bZ{{\mathbb Z}}

\newcommand\cA{{\mathcal A}}

\newcommand\cE{{\mathcal E}}
\newcommand\cF{{\mathcal F}}

\newcommand\cL{{\mathcal L}}

\newcommand\cS{{\mathcal S}}

\newcommand\cU{{\mathcal U}}

\newcommand\cY{{\mathcal Y}}

\newcommand\wt{\widetilde}

\DeclareMathOperator{\Ad}{Ad}
\DeclareMathOperator{\id}{id}

\DeclareMathOperator{\End}{\mathrm{End}}

\DeclareMathOperator{\Max}{\mathrm{Max}}

\DeclareMathOperator{\diag}{\mathrm{diag}}
\DeclareMathOperator{\Emb}{\mathrm{Emb}}

\newcommand{\qedhere}{\mbox{}\hfill\ensuremath{\blacksquare}}

\newcommand{\comment}[1]{}

\newcommand{\xrightarrowdbl}[2][]{%
  \xrightarrow[#1]{#2}\mathrel{\mkern-14mu}\rightarrow
}

\title{Irremediably singular quantum branched covers}
\author{Alexandru Chirvasitu}

\begin{document}

\date{}

\newcommand{\Addresses}{{% additional braces for segregating \footnotesize
  \bigskip
  \footnotesize

  \textsc{Department of Mathematics, University at Buffalo}
  \par\nopagebreak
  \textsc{Buffalo, NY 14260-2900, USA}  
  \par\nopagebreak
  \textit{E-mail address}: \texttt{achirvas@buffalo.edu}

}}

\maketitle

\begin{abstract}
  We prove that the Natsume-Olsen non-commutative spheres $\mathbb{S}^{2n-1}_{\theta}$ dualize for rational deformation parameters to provide examples of quantum branched covers over their respective centers' maximal spectra, embeddable into locally trivial $C^*$ bundles precisely when said spheres are classical; this, despite the bundles in question being of finite type over dimensional strata. The proof of the noted rigidity phenomenon relies on the quantum analogue of the 3-sphere's Heegaard splitting as a pushout of two solid tori.
\end{abstract}

\noindent \emph{Key words:
  $C^*$ bundle;
  Heegaard decomposition;
  homotopy;
  locally trivial;
  matrix bundle;
  quantum sphere;
  quantum torus;
  subhomogeneous
}

\vspace{.5cm}

\noindent{MSC 2020: 46M20 55R10; 46L85; 46L65; 53B21; 46L30; 55R40; 55S40
  
  % 46M20  	Methods of algebraic topology in functional analysis (cohomology, sheaf and bundle theory, etc.)
  % 55R10  	Fiber bundles in algebraic topology
  % 46L85  	Noncommutative topology
  % 46L65  	Quantizations, deformations for selfadjoint operator algebras
  % 53B21  	Methods of local Riemannian geometry
  % 46L30  	States of selfadjoint operator algebras
  % 55R40  	Homology of classifying spaces and characteristic classes in algebraic topology
  % 55S40  	Sectioning fiber spaces and bundles in algebraic topology
  
}

%\tableofcontents

%%%%%%%%%%%%%%%%%%%%%%%%%%%%%%%%
%%%%%%%%%%%%%%%%%%%%%%%%%%%%%%%%
\section*{Introduction}

In studying the internal structure a $C^*$-algebra acquires by virtue of having large central subalgebras one is naturally led to the notion of a \emph{field of $C^*$-algebras} or \emph{$C^*$ bundles} for short (\cite[Introduction]{fell_struct}, \cite[Definition II.13.4]{fd_bdl-1} or \cite[pp.7-9]{dg_ban-bdl} for the broader notion of a Banach bundle, etc.) : continuous open surjections $\cA\xrightarrowdbl{\pi}X$ with $C^*$ structures on the individual \emph{fibers} $\cA_x:=\pi^{-1}x$ so that the $*$-algebraic operations and the fiber-wise norms vary continuously in ways not difficult to make precise. 

The present paper is concerned exclusively with \emph{subhomogeneous} Banach/$C^*$ bundles \cite[Definition IV.1.4.1]{blk}: those with a global finite bound on the fiber dimensions $\dim \cA_x$. It is in that context that \cite{bg_cx-exp} studies the \emph{non-commutative branched covers} introduced in \cite[Definition 1.2]{pt_brnch}: subhomogeneous $C^*$ bundles (typically over compact Hausdorff base spaces) with the additional requirement that there be
\begin{itemize}[wide]
\item a \emph{conditional expectation} (norm-1 projection \cite[Theorem II.6.10.2]{blk})
  \begin{equation*}
    \text{continuous sections of $\cA$}
    =:
    \Gamma(\cA)\xrightarrowdbl{\quad E\quad}C(X)
    :=
    \text{continuous $\bC$-valued functions on $X$};
  \end{equation*}

\item of \emph{finite index} in the sense of \cite[Definition 2]{fk_fin-ind}: $KE-\id$ is a positive map for some $K\ge 1$.    
\end{itemize}
As homogeneity (i.e. fiber-dimension \emph{constancy} rather than boundedness) automatically provides such expectations \cite[Proposition 3.4]{bg_cx-exp}, whether or not a subhomogeneous bundle is a quantum branched cover can be construed as a gauge for how pathologically the singular (lower-than-typical-dimensional) fibers are distributed. Another such gauge is \emph{homogeneous embeddability}: whether or not a subhomogeneous $C^*$ bundle is embeddable into a homogeneous one (or into a locally trivial matrix bundle: the framing employed throughout most of the sequel). Reasons why such embeddings might not be possible are not difficult to glean: 
\begin{itemize}[wide]
\item On the one hand, \cite[Example 3.6]{bg_cx-exp} provides a quantum branched cover over a one-point compactification $X^+=X\sqcup\{*\}$ with one exceptional fiber $\bC$ at the one exceptional point $*$, typical fiber $M_2$ elsewhere, and failing matrix-bundle embeddability because the bundle is not \emph{of finite type} on $X$: not trivializable over the members of a finite open cover.

\item On the other hand, in otherwise very tame topological situations a bundle with one exceptional $\bC^2$ fiber and $M_3$ generic fiber fails matrix-bundle embeddability because it fails to admit a faithful tracial expectation (\cite[Example 2.2]{CHIRVASITU2026130746}, \cite[Proposition 2.1]{2605.10752v1}). 
\end{itemize}

It becomes natural at this stage to ask whether, such obstacles being absent, matrix-bundle embeddability can fail for a quantum branched cover $\cA\xrightarrowdbl{}X$ for reasons more directly traceable to topological-algebraic properties of the locally trivial bundles obtained by restricting $\cA$ to its \emph{strata}: loci of constant fiber isomorphism class. With that object in view we examine subhomogeneous $C^*$ bundles attached to various quantum-manifold function algebras. Recall the \emph{quantum} (or \emph{non-commutative}) \emph{tori} $\bT_{\theta}^n$ and \emph{spheres} $\bS_{\theta}^{2n-1}$ of, say, \cite[p.193]{rief_case} and \cite[Definition 2.1]{no_sph} respectively: defined dually by describing their associated (non-commutative) $C^*$ complex-function algebras by generators/relations: for $\theta=-\theta^t\in M_n(\bR)$,
\begin{equation*}
  \begin{aligned}
    A^n_{\theta}:=C(\bT^n_{\theta})
    &:=
      C^*\Braket{\text{unitary }U_{1\le i\le n}
      \big|
      \forall(i,j)\left(U_j U_i = e^{2\pi i \theta_{ij}}U_iU_j\right)}\\
    C^n_{\theta}:=C(\bS^{2n-1}_{\theta})
    &:=
      C^*\Braket{\text{normal }T_{1\le i\le n}
      \big|
      \forall(i,j)\left(T_j T_i=
      e^{2\pi i \theta_{ij}}T_iT_j\right)
      ,\quad
      \sum_i T_i^*T_i=1}.
  \end{aligned}
\end{equation*}

The second displayed family, it turns out, provides examples ``in nature'' (for rational deformation parameters $\theta$) of precisely the type alluded to above. The main result to that effect reads as follows.

\begin{theorem}\label{th:sn.theta}
  Let $n\in \bZ_{\ge 2}$ and $\theta\in M_n(\bQ)$ a rational skew-symmetric matrix.
  \begin{enumerate}[(1),wide]
  \item\label{item:th:sn.theta:nc.brnch} The center inclusion $Z(C^n_{\theta})\le C^n_{\theta}$ is dual to a non-commutative branched cover $\cS\xrightarrowdbl{\pi}X$.

  \item\label{item:th:sn.theta:ft} $\cS$ has finite type over its dimensional \emph{strata}
    \begin{equation*}
      X_d:=\left\{x\in X\ :\ \dim \cS_x=d\right\}. 
    \end{equation*}

  \item\label{item:th:sn.theta:tr.exp} $\cS$ is equipped with a unique tracial expectation
    \begin{equation*}
      \Gamma(\cS)=C^n_{\theta}
      \xrightarrowdbl{\quad E\quad}
      Z(C^n_{\theta})=C(X),
    \end{equation*}
    \emph{optimally} of finite index in the sense that $KE-\id\ge 0$ for the theoretically minimal value
    \begin{equation}\label{eq:K.ct}
      K=\sup_{x\in X}\sum\left(\text{dimensions of irreducible $\cS_x$-representations}\right).
    \end{equation}

  \item\label{item:th:sn.theta:not.emb} $\cS$ embeds into a locally trivial subhomogeneous $C^*$ bundle over $X$ if and only if $\theta$ is integral, i.e. the quantum sphere $\bS^{2n-1}_{\theta}$ is classical. 
  \end{enumerate}
\end{theorem}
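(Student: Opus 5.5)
The plan is to make the bundle explicit first and read everything off that description. For rational $\theta$, write $\theta_{ij}=p_{ij}/q$ with a common denominator $q$. Then the monomials $T_i^{q}$, $T_i^*T_i$ and $T_i^{*q}$ are central, and they generate the center $Z(C^n_\theta)$. Its maximal spectrum $X$ should be the orbit space of $\bS^{2n-1}\subset\bC^n$ under an action of a finite abelian group $G\le \bT^n$ (roots of unity acting coordinatewise). More intrinsically, $X$ can be stratified by the support $S\subseteq\{1,\dots,n\}$ of the point, meaning the set of indices $i$ with $T_i^*T_i\ne 0$ there.

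Over the stratum indexed by $S$, the fiber $\cS_x$ is the quotient of $C^n_\theta$ by the ideal generated by the maximal ideal of $x$. Concretely, $T_i=0$ for $i\notin S$, and for $i\in S$ we have $T_i=r_iu_i$ with $r_i>0$ fixed and $u_i$ unitary with $u_i^{q}$ scalar. The fiber is therefore a quotient of the twisted group algebra of $\bZ_q^{S}$ by the restricted cocycle $\theta|_S$. That algebra is a direct sum of copies of a single matrix algebra $M_{d_S}$, with $d_S^2$ equal to the index of the radical of the bicharacter $\theta|_S$ mod $\bZ$. The summands are permuted by characters of the radical, and fixing the central values $T_i^q$ cuts the fiber down to one block $M_{d_S}$.

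This gives the following strategy for each part.

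\begin{itemize}
\item[(1)] Openness and continuity of the bundle follow from the general Dauns–Hofmann / Fell picture for algebras finitely generated over their centers. The finite-index expectation comes from part (3).

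\item[(2)] On a dimensional stratum $X_d$, the relevant $S$ all have the same $d_S$, and $X_d$ is a finite union of pieces $\{x : \mathrm{supp}=S\}$. Each piece is a torus times an open simplex modulo $G$. On it, the fiber is the Azumaya algebra obtained by restricting the rotation-algebra bundle $A^{|S|}_{\theta|_S}$ over its center's spectrum. That bundle is trivialized over finitely many opens, for instance by choosing local $q$-th roots of the central unitaries $u_i^q$. Finitely many supports then give finite type.

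\item[(3)] Define $E$ as the average over the dual gauge action of $\bT^n$ restricted to the finite subgroup $\widehat{\bZ_q}^n$. Equivalently, $E$ is the projection onto monomials $T^aT^{*b}$ whose exponents make them central. Fiberwise this is the normalized trace on $M_{d_S}$, since the only tracial expectation onto scalars in a full matrix algebra is the normalized trace. Uniqueness then follows by density of the generic stratum plus continuity. For a fiber $M_d$ the optimal index of the normalized trace is $d$ (Pimsner–Popa), and $d=\sum$(irreducible dimensions) on that fiber. The supremum is attained on the open dense top stratum, which gives \Cref{eq:K.ct}.
\end{itemize}

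Part \Cref{item:th:sn.theta:not.emb} is the main obstacle. If $\theta$ is integral everything is commutative, so the bundle is $C(\bS^{2n-1})$ over itself and trivially embeds. For the converse, an embedding $\cS\subseteq\cE$ into a locally trivial $M_N$-bundle restricts to embeddings over closed unions of strata, and I reduce to $n=2$. Pick $i,j$ with $\theta_{ij}\notin\bZ$ and restrict to the closed subset where $T_k=0$ for $k\ne i,j$. This quotient is $C(\bS^3_{\theta'})$ for a $2\times2$ rational $\theta'$, and its base is the quotient of $\bS^3$ by $G$.

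Now use the Heegaard decomposition. $\bS^3$ is the union of the two solid tori $\{|z_1|\ge|z_2|\}$ and $\{|z_2|\ge|z_1|\}$ glued along the torus $|z_1|=|z_2|$. Dually, $C^2_{\theta}$ is a pullback of two quantum solid tori over $A^2_\theta$. On the core circle $\{z_2=0\}$ the fiber collapses to $\bC$, while on the interior the fiber is $M_q$ with $q$ the order of $\theta_{12}$. Over a solid torus $D\times \bT$ minus its core, the $M_q$-bundle is the rotation-algebra Azumaya bundle over the 2-torus pulled back. Its restriction to a meridian disk's boundary circle has a clutching or "Dixmier–Douady-type" winding forcing the unitary $u_2$, via $u_1u_2=e^{2\pi i\theta}u_2u_1$, to implement a nontrivial rotation.

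A homogeneous embedding would extend the $M_q$-frame (a copy of $M_q\subseteq M_N$, i.e. a map to the homogeneous space $PU(N)/\text{centralizer}$) across the meridian disk, i.e. null-homotope that loop. I would compute the resulting homotopy class as an element of $\pi_1$ of the space of unital embeddings $\Emb(M_q,M_N)$, which is $\bZ/q$-torsion. I would then show it is the class of $e^{2\pi i\theta}\ne1$: near the core, $u_2=z_2/|z_2|$ rotates once around the meridian and conjugates $u_1$ by a phase that must be trivialized. The obstruction is therefore nonzero exactly when $\theta\notin\bZ$. The hard part is making this clutching computation rigorous, and checking that finite type on strata does not sneak in a trivialization.
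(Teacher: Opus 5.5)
\textbf{The gap is in part (4).} It is exactly the gap between \Cref{cor:q2.div.n} and \Cref{th:s.not.ext}.

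Your invariant lives in $\pi_1\Emb_{M_q,M_N}$. For $N=qd$ this space is $U(qd)/(1\otimes U(d))$. Since $u\mapsto I_q\otimes u$ multiplies determinant winding by $q$, you get $\pi_1\cong\bZ/q$.

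The loop to be trivialized is $z\mapsto\Ad_{\diag_{q-1}(z^p,1,\ldots,1)\otimes I_d}$. Its determinant winding is $pd$, so its class is $pd \bmod q$. This is not a class determined by $e^{2\pi i\theta}$ alone, and it vanishes whenever $q\mid d$, i.e. $q^2\mid N$. So your obstruction cannot exclude embeddings into $M_{q^2d'}$-bundles; at best it proves the weaker \Cref{cor:q2.div.n}.

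The mechanism you describe, extending the $M_q$-frame across a meridian disk, is also not available. The frame does not extend over the core circle, where the fiber is not $M_q$. The loop that must be killed is the gluing loop along the core direction; the disk only constrains \emph{how} it may be killed.

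\textbf{The missing idea is to exploit the singular fiber itself.} Over the core circle the fiber is $C^*\langle U_1\rangle/(U_1^q-w)\cong\bC^q=C^*(D)$, not $\bC$ as you assert. Your claim that every fiber is a single block $M_{d_S}$ fails precisely on the singular strata.

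This has two consequences, following the paper's argument:
\begin{enumerate}[(a)]
\item An embedding over a meridian disk is a map from the punctured disk to $\Emb_{M_q,M_N}$ whose composite with restriction to $\Emb_{C^*(D),M_N}$ extends continuously over the center. Hence, along the whole homotopy trivializing the gluing loop, the restrictions to $C^*(D)$ over small circles around the core are uniformly nearly constant (\Cref{pr:embs.are.homotopies}).
\item A nearest-point retraction onto fibers (\Cref{th:map.through.ct.on.d}) then confines the nullhomotopy to the fiber $C_{U(qd)}(D)/C_{U(qd)}(M_q)\cong U(d)^q/U(d)$ of the restriction fibration. Its $\pi_1\cong\bZ^q/\bZ(1,\ldots,1)$ is torsion-free.
\end{enumerate}
After translating by the cyclic shift, the loop becomes $\diag_0(z^p,1,\ldots,1)\otimes I_d$, of class $(pd,0,\ldots,0)$. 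This is nonzero for every $d$ once $q>1$, and that is what excludes all $N$.

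\textbf{Smaller slips in (1)--(3).}
\begin{itemize}
\item The lattice of $k\in\bZ^n$ with $\theta k\in\bZ^n$ can strictly contain $q\bZ^n$; for instance $T_1T_2T_3$ is central when $n=3$ and $\theta_{ij}=\frac12$ for $i<j$. So $T_i^q$ and $T_i^*T_i$ need not generate the center, and your $\bZ_q^n$-average need not fix it.
\item Pieces of $X_d$ with different supports can be adjacent. Finite type therefore needs the covering-dimension argument rather than a count of supports.
\end{itemize}
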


The proof reduces fairly quickly via extant literature to the case $n=2$ (3-spheres), where some of the algebraic-topology machinery alluded to above becomes serviceable.

% %

%%%%%%%%%%%%%%%%%%%%%%%%%%%%%%%%
%%%%%%%%%%%%%%%%%%%%%%%%%%%%%%%%
\section{Quantum spheres and singular solid non-commutative tori}\label{se:qtm.mflds}

For rational $\theta=\left(\theta_{ij}=\frac{p_{ij}}{q_{ij}}\right)_{i,j}\in M_n(\bQ)$ we have (\cite[\S 2]{hks_erg} or \cite[Theorem 3.1]{rief_canc} for $n=2$, \cite[Proposition 2.6]{MR5049282}, say, in general)
\begin{equation}\label{eq:a.is.end.e}
  A^n_{\theta}
  \cong
  \End(\cE)
  =
  \Gamma\left(\cE nd \left(\cE\right)\right)
  =
  \Gamma\left(\cE\otimes \cE^*\right)
\end{equation}
for a \emph{projectively flat} \cite[Corollary I.2.7]{kob_cplx} vector bundle $\cE$ over the classical torus $\bT^n$ recoverable as the spectrum $\Max Z(A^n_{\theta})$ of the center $Z(A^n_{\theta})$. The rank $r=r_{\theta}$ of $\cE$ is given explicitly in \cite[Lemma 2.1]{MR5049282}, and is precisely the lowest-terms denominator of $\theta_{12}=\frac pq$ for $n=2$. The identification
\begin{equation} \label{eq:cntheta}
  C^n_{\theta}
  \ni T_i
  \xmapsto{\quad}
  t_i U_i
  \in
  C\left(\bS^{n-1}_{\ge 0},A^n_{\theta}\right)
  ,\quad
  \bS^{n-1}_{\ge 0}
  :=
  \left\{\left(t_1,\cdots,t_n\right)\in \bR^n_{\ge 0}\ :\ \sum t_i^2=1\right\}
\end{equation}
is an embedding \cite[Theorem 2.5]{no_sph} giving $C^n_{\theta}$ its own realization as $\Gamma(\cA)$ for a subhomogeneous $C^*$ bundle $\cA\xrightarrowdbl{}\Max Z\left(C^n_{\theta}\right)$ with generic fiber $M_{r_{\theta}}$. For $n=2$ and rational
\begin{equation*}
  \theta=
  \begin{pmatrix}
    0&\theta_{12}\\
    -\theta_{12}&0
  \end{pmatrix}
  ,\quad
  \theta_{12}=\frac{p}{q}
  ,\quad
  p\in \bZ,\ q\in \bZ_{>0},\ \gcd(p,q)=1
\end{equation*}
the situation is particularly pleasant \cite[Proposition 4.5]{MR5040755}:

\begin{recollection}\label{rec:s3t2}
  \begin{enumerate}[(1),wide]
  \item\label{item:rec:s3t2:a} $A^2_{\theta}\cong \Gamma(\cA:=\cE nd \cE)$ is a $q\times q$-matrix bundle over
    \begin{equation*}
      \bT^2
      \cong
      \Max Z(A^2_{\theta})
      =
      \Max C^*\Braket{U_1^q,U_2^q}.
    \end{equation*}

  \item\label{item:rec:s3t2:cz} \Cref{eq:cntheta} then realizes $C^2_{\theta}$ as $\Gamma(\cS)$ for a subhomogeneous $C^*$-bundle $\cS\xrightarrowdbl{}\bS^3\cong \Max Z^2_{\theta}:=Z\left(C^2_{\theta}\right)$ over the classical sphere. To expand: identifying $\bS^{1}_{\ge 0}\cong [0,1]$ (and conflating $C^2_{\theta}$ with an $A_{\theta}^2$-valued function algebra on $[0,1]$ via \Cref{eq:cntheta}), we have
    \begin{equation*}
      \begin{aligned}
        C^2_{\theta,t}
        &:=
          C^2_{\theta}/\left(\text{$t$-vanishing functions}\right)
          \cong
          \begin{cases}
            A^2_{\theta}&t\in (0,1)\\
            C^*\Braket{U_{t+1}}&t\in \{0,1\}
          \end{cases}\\
        Z^2_{\theta,t}
        &:=
          Z^2_{\theta}/\left(\text{$t$-vanishing functions}\right)
          \cong
          \begin{cases}
            Z\left(A^2_{\theta}\right)&t\in (0,1)\\
            C^*\Braket{U^q_{t+1}}&t\in \{0,1\}.
          \end{cases}
      \end{aligned}
    \end{equation*}
    Regarding $U^q_i$ as coordinates on the classical torus $\bT^2$, there are corresponding restrictions $\cS_t\xrightarrowdbl{}\bS^3_t$ with the latter space being the fiber at $t\in [0,1]$ of the map
    \begin{equation}\label{eq:s301}
      \begin{tikzpicture}[>=stealth,auto,baseline=(current  bounding  box.center)]
        \path[anchor=base] 
        (0,0) node (l) {$\bS^3$}
        +(4,.5) node (u) {$\bS^2$}
        +(6,0) node (r) {$[0,1]$}
        ;
        \draw[->>] (l) to[bend left=6] node[pos=.5,auto] {$\scriptstyle \text{Hopf fibration}$} node[pos=.5,auto,swap]{$\scriptstyle \text{\cite[\S 14.1.9]{td_alg-top}}$} (u);
        \draw[->>] (u) to[bend left=6] node[pos=.5,auto] {$\scriptstyle $} (r);
        \draw[->>] (l) to[bend right=10] node[pos=.5,auto,swap] {$\scriptstyle \eta$} (r);
      \end{tikzpicture}
    \end{equation}
    resulting from specializing \Cref{eq:cntheta} to centers; the right-hand arrow simply compresses the circles featuring in the bulk of a decomposition $\bS^2\cong \left\{p_0,p_1\right\}\sqcup (0,1)\times \bS^1$ to points for a choice of antipodes $p_{0,1}$. 
    
  \item\label{item:rec:s3t2:sing} The singular fibers of $\cS$ are abelian $q$-dimensional and lie above the two core circles (henceforth $\bS^1_{0,1}$) of the two solid tori in the standard \emph{Heegaard decomposition} \cite[Exercise 2.11(i)]{hemp_3mfld}
    \begin{equation}\label{eq:heeg.dec}
      \bS^3
      \cong
      \left(\bD^2\times \bS^1\right)
      \cup_{\bT^2}
      \left(\bS^1\times \bD^2\right).
    \end{equation}
    
  \item\label{item:rec:s3t2:res} The restriction $\cS_t:=\cS|_{\bT_t^2}$ to any 2-torus slice
    \begin{equation*}
      \bT^2_t:=
      \bT^2\times \{t\}
      \subset
      \bT^2\times (0,1)
      \cong
      \bS^3\setminus \left(\bS^1_0\sqcup \bS^1_1\right)
      ,\quad
      t\in (0,1)
    \end{equation*}
    is isomorphic to the selfsame $\cA\xrightarrowdbl{}\bT^2$. 
  \end{enumerate}
\end{recollection}

\begin{remarks}\label{res:bdl.on.s3}
  \begin{enumerate}[(1),wide]
  \item Given the centrality of the 3-sphere in the preceding discussion, we remind the reader that all bundles with Lie structure group over $\bS^3$ are trivial. This follows from the classification \cite[Theorem 18.5]{steen_fib_1999} of fiber bundles over spheres in conjunction with
    \begin{itemize}[wide]
    \item the simple connectivity of $\bS^3$, ensuring structure-group reduction to \emph{connected} Lie groups;
    \item reduction to maximal compact subgroups therefrom (as recalled in \cite[Remark 1.12(2)]{2605.10752v1} for instance);
    \item and the fact \cite[Proposition V.7.5]{btd_lie_1995} that compact Lie groups (hence also all Lie groups) have vanishing $\pi_2$. 
    \end{itemize}

  \item We remark also that matrix bundles $\cA$ on 2-tori are automatically of the form $\cE nd(\cE)\cong \cE\otimes \cE$ for vector bundles $\cE$, uniquely determined up to tensoring by line bundles.

    Indeed: the \emph{Dixmier-Douady class} \cite[post Definition 9.5]{gbvf_ncg} $\alpha(\cA)\in H^3(\bT^2,\bZ)$ vanishes for obvious dimension reasons, hence \cite[Remark 18.3.8]{hjjm_bdle} the existence of $\cE$. As to the uniqueness claim, it is a consequence of the \emph{fiber-sequence} fragment
    \begin{equation*}
      B\bS^1
      \xrightarrow{\quad}
      BU(\bullet)
      \xrightarrow{\quad}
      BPU(\bullet)
    \end{equation*}
    of classifying spaces attached to
    \begin{equation*}
      \{1\}
      \to
      \bS^1
      \lhook\joinrel\xrightarrow{\quad}
      U(\bullet)
      \xrightarrowdbl{\quad}
      PU(\bullet)
      \to
      \{1\}:
    \end{equation*}
    The right-hand map classifies $\cE\mapsto \cE\otimes \cE^*$ while $B\bS^1$ classifies line bundles. Alternatively:
    \begin{itemize}[wide]
    \item Two $q\times q$-matrix bundles over a 2-torus are isomorphic precisely when their corresponding characteristic classes (the $\beta_q\in H^2(\bT^2,\bZ/q)$ of \cite[\S 18.3.7]{hjjm_bdle}) are equal; this is noted in broader generality as part of \cite[Theorem 0.1(2)]{2509.10812v2}.

    \item This in turn means that $\cE\otimes \cE^*\cong \cF\otimes \cF^*$ if and only if the first Chern classes of the two rank-$q$ vector bundles are equal modulo $q$.

    \item Finally, the complete topological characterization \cite[p.2, Proposition]{MR1423157} of a vector bundle over a surface by Chern class and rank renders this equivalent to $\cF\cong \cE\otimes \cL$ for a line bundle $\cL$.
    \end{itemize}
  \end{enumerate}  
\end{remarks}

\begin{theorem}\label{th:s.not.ext}
  Let $\theta_{12}=\frac{p}{q}\in \bQ$ be a lowest-terms rational.
  
  The subhomogeneous $C^*$ bundle $\cS\xrightarrowdbl{} \bS^3$ of \Cref{rec:s3t2}\Cref{item:rec:s3t2:cz} associated to the quantum sphere $\bS^3_{\theta}$ embeds into a locally trivial subhomogeneous $C^*$ bundle over either of the two solid tori $\bD^2\times \bS^1$ in the decomposition \Cref{eq:heeg.dec} if and only if $q=1$. 
\end{theorem}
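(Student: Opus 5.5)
The \emph{if} direction is immediate. For $q=1$ the algebra $C^2_\theta$ is commutative, so $\cS$ is the trivial line bundle $\bS^3\times\bC$. In particular $\cS$ is already locally trivial over each solid torus. All the work is in the \emph{only if} direction. By the symmetry $U_1\leftrightarrow U_2$, $t\leftrightarrow 1-t$ of \Cref{rec:s3t2}, it suffices to treat the solid torus $V_0:=\bD^2\times\bS^1$ around the core circle $\bS^1_0$.

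Suppose $\cS|_{V_0}\hookrightarrow\cB$ with $\cB\to V_0$ locally trivial, with finite-dimensional fiber $F=\bigoplus_i M_{n_i}$ and structure group $\mathrm{Aut}(F)$. I will argue in four steps.
\begin{enumerate}[(1),wide]
\item Since $V_0$ deformation-retracts onto $\bS^1_0$, $\cB$ is pulled back from a bundle over a circle. The identity component of $\mathrm{Aut}(F)$ is $\prod_i PU(n_i)$, which is connected. Hence, possibly after passing to a finite cover of the circle to deal with permutations of isomorphic summands, each summand $\cB_i$ is the endomorphism bundle $\cE nd(\cV_i)$ of a vector bundle whose Dixmier--Douady data restricted to any slice torus $\bT^2_t$ is trivial. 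Concretely, this means $\cV_i|_{\bT^2_t}\cong\cL\otimes\bC^{n_i}$, so $c_1(\cV_i|_{\bT^2_t})\equiv 0 \pmod{n_i}$.
\item By \Cref{rec:s3t2}\Cref{item:rec:s3t2:res}, on a slice $\bT^2_t$ with $t$ small the composite $\cA=\cE nd(\cE)\to\cE nd(\cV_i)$ is a (possibly non-unital) morphism of matrix bundles. Its image projection cuts out a subbundle $\cV_i'\le\cV_i$, and $\cV_i'\cong\cE\otimes\cW_i$ for a multiplicity bundle $\cW_i$ of rank $m_i$.
\item The first Chern class then satisfies $c_1(\cV_i')=m_ic_1(\cE)+q\,c_1(\cW_i)$. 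Moreover, $c_1(\cE)$ is a unit modulo $q$; this comes from the rank-$q$ projectively flat bundle of \Cref{eq:a.is.end.e}, whose Chern class is $\equiv \pm p$ or $p^{-1}\pmod q$.
\item The complementary projection $1-(\text{image projection})$ is a section of $\cB_i|_{V_0}$. Using step (1), its restriction to $\bT^2_t$ extends over $V_0$. Combining this with $c_1(\cV_i|_{\bT^2_t})\equiv 0\pmod{n_i}$ is meant to yield a congruence forcing $m_ic_1(\cE)\equiv 0\pmod q$.
\end{enumerate}

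The main obstacle is to show that some $m_i$ is coprime to $q$; with that, step (3) gives $q\mid c_1(\cE)$, a contradiction unless $q=1$. The coprimality is where the singular fibers must enter: the pure torus-slice argument alone cannot see it. Along $\bS^1_0$ the fiber of $\cS$ is $C^*\langle U_1\rangle/(U_1^q-z)\cong\bC^q$, with $q$ minimal projections $e_1,\dots,e_q$ given by the spectral projections of $U_1$. These are permuted cyclically by monodromy around the meridian, since $U_2$ conjugates $U_1$ to $e^{2\pi i p/q}U_1$. I would exploit this as follows.
\begin{itemize}[wide]
\item The images of the $e_j$ in each $M_{n_i}$ have ranks $r_{ij}$.
\item By continuity of rank along the locally trivial $\cB$, the family $(r_{ij})_j$ must be monodromy-invariant, hence constant in $j$, say $r_{ij}=r_i$.
\item Approaching the core from the bulk, the minimal projections of the generic $M_q$ fiber have image rank $m_i$. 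Each generic minimal projection degenerates to a single $e_j$. This requires a local computation in $A^2_\theta$ near $t=0$, where $t U_1$ dominates while $\sqrt{1-t^2}\,U_2\to 0$.
\item That degeneration gives $r_i=m_i$, and the splitting of the image of the unit then gives $q r_i = q m_i$. This is consistent but does not yet bound $\gcd(m_i,q)$.
\item To finish, I would use the global obstruction more carefully. Rather than compute Chern classes of $\cW_i$, compare the $\beta_q$-classes of \Cref{res:bdl.on.s3} directly: the commutant of $\cA|_{\bT^2_t}$ in $\cB_i|_{\bT^2_t}$ is $\cE nd(\cW_i)$ plus a corner. Triviality of $\beta$ for $\cB_i$ (step (1)) then forces $\beta_q(\cA)=0$ in $H^2(\bT^2,\bZ/q)$, contradicting $\beta_q(\cA)=p\ne 0$ for $q>1$.
\end{itemize}
I expect that justifying this last $\beta$-class additivity for non-unital corners is the delicate step. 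To get it, I would first cut the corner down using the constant-rank projections obtained above.
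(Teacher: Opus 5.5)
\textbf{Verdict: the ``only if'' direction has a genuine gap, and it cannot be closed.} Your \emph{if} direction is fine. The gap sits exactly where you locate it.

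\textbf{What your steps actually give.} Steps (1)--(4) only see the slice tori $\bT^2_t$. There, the congruences you have give at most $m_i\,c_1(\cE)\equiv 0\pmod q$, i.e.\ $q\mid m_i$ and $q^2\mid n_i$. That is just the paper's weaker \Cref{cor:q2.div.n}. The closing $\beta$-class step does not improve this. Triviality of the class of the ambient summand yields $m_i\,\beta_q(\cA)=0$ in $\bZ/q$, not $\beta_q(\cA)=0$, and this is vacuous once $q\mid m_i$. Slice data genuinely cannot do better. Put $X(w):=\diag_{q-1}(w,1,\ldots,1)$, so that $X(w)^q=w$ and $DX(w)=\zeta X(w)D$ for the $D$ of \Cref{eq:diag.subdiag}. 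Then $U_1\mapsto X(z_1)\otimes I_q$, $U_2\mapsto D\otimes X(z_2)$ (with $z_i:=U_i^q$) embeds $\cA=\cE nd(\cE)$ unitally into the trivial $M_{q^2}$ bundle over $\bT^2$. Your singular-fiber bullets, as you note yourself, only return $r_i=m_i$.

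\textbf{The coprimality you want cannot be derived.} The same formulas extend across the singular circles. Set $T_1\mapsto t_1\,X(z_1)\otimes I_q$ and $T_2\mapsto t_2\,D\otimes X(z_2)$, with $t_i:=|T_i|$; $z_i$ is needed only where $t_i>0$. These assignments
\begin{itemize}
\item are continuous on $\bS^3$;
\item satisfy the relations defining $C^2_\theta$;
\item send each central element to its Gelfand transform times the identity;
\item are injective on every fiber, including the $\bC^q$ fibers over the core circles. There $X(z_1)\otimes I_q$, resp.\ $D\otimes X(z_2)$, has all $q$-th roots of $z_1$, resp.\ $z_2$, as eigenvalues.
\end{itemize}
So $\cS$, and with it its restriction to either solid torus, embeds into the trivial $M_{q^2}$ bundle for every $q$. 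The ``only if'' half appears to be false as stated.

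\textbf{Comparison with the paper, and where its argument breaks.} The paper's route is not Chern-class bookkeeping. It uses the homotopy reduction of \Cref{pr:embs.are.homotopies} and \Cref{th:map.through.ct.on.d} to the loop \Cref{eq:zp.shift.diag} in $U(d)^q/U(d)$. That loop's class $(pd,0,\ldots,0)$ in $\pi_1\cong\bZ^q/\bZ(1,\ldots,1)$ is nonzero for $q>1$. The example above shows where this needs repair. The argument fixes the embedding over a meridian disk to be the left-tensorand $\iota$. But embeddings of $\cS|_{\bD^2}$ carry their own class $c\in\bZ^q/\bZ(1,\ldots,1)$ near the center of the disk. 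Precomposing with the gluing automorphism sends $c\mapsto\sigma c+pd\,\epsilon$, where $\sigma$ is a cyclic permutation of coordinates and $\epsilon$ a standard basis vector. This affine map has a fixed point exactly when $q\mid d$, which recovers only \Cref{cor:q2.div.n}.
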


We retain the notation employed in \Cref{rec:s3t2}, and supplement it with subscripts $F\subseteq [0,1]$ for both $\bS^3$ and the bundle $\cS$ to indicate $\eta^{-1}F$ (see \Cref{eq:s301}) and the restriction $\cS|_{\eta^{-1}F}$ respectively. The elements $U_i^q\in A^2_{\theta}$ can be regarded as coordinates on any of the tori $\bT^2_t$, $t\in (0,1)$, with $U^q_{t+1}$ also acting as a respective coordinate along the exceptional circle $\bS^1_{t\in \{0,1\}}$.

Recall that per the bundles-on-spheres classification of \cite[Theorem 18.5]{steen_fib_1999} bundles over $\bS^1$ (hence also the homotopy-equivalent solid tori) with path-connected structure group are automatically trivial. This applies in particular to vector and matrix bundles, their structure groups being unitary and projective unitary respectively.

It will be profitable to isolate a concrete technical embeddability criterion. We first need a bit of notation referring to shifted diagonal matrices. 

\begin{notation}\label{not:off.diag}
  Write
  \begin{equation*}
    \diag_k\left(z_1\cdots z_n\right)
    :=
    M=\left(m_{ij}\right)_{i,j=1}^n
    ,\quad
    m_{ij}
    :=
    \delta_{j-i,k}z_i
    \quad
    \left(\text{$\delta_{\bullet}:=$Kronecker delta}\right),
  \end{equation*}
  with the comparison between $j-i$ and $k$ being modulo $n$ (so plain diagonal matrices correspond to index 0: $\diag=\diag_0$). 
\end{notation}

Throughout the ensuing discussion $\Ad_T$ denotes conjugation $T\bullet T^{-1}$. Note also, in preparation for stating \Cref{pr:embs.are.homotopies}, that embeddings $M_q\le M_n$ (and hence of $M_q$ bundles into $M_n$ bundles) exist only if $q|n$. For a reduced rational $\frac pq$ (featuring prominently in the sequel as the torus/sphere-deformation parameter) fix
\begin{equation}\label{eq:diag.subdiag}
  D:=\diag_0\left(\zeta^i\right)_{i=0}^{q-1}
  ,\ \zeta
  :=
  e\left(\frac pq\right)
  :=
  \exp\left(2\pi i\cdot \frac{p}{q}\right).
\end{equation}
For a finite-dimensional $C^*$-algebra $A$ we write $\Emb_{A,M_n}$ for the space of unital $C^*$-embeddings $A\le M_n$ and $\Emb^{=}_{A,M_n}$ for only those unital embeddings giving all simple factors of $A$ equal multiplicities; $\Emb^=_{M_q,M_n}$ is non-empty, then, precisely when $q|n$.

\begin{remark}\label{re:emb.on.emb.fib}
  Note the homogeneity of both $\Emb^=_{\bullet,M_n}$, $\bullet\in \{C^*(D),M_q\}$ under the conjugation action of the unitary group $U(n)=U(qd)$ when $q|n$ (so that both manifolds are non-empty). Writing $C_{G}(A)$ for the centralizer of $A$ in a group $G$ of automorphisms thereof (whatever structure $A$ may be), the restriction map
  \begin{equation}\label{eq:homog.sp.emb.fib}
    \Emb^=_{M_q, M_q\otimes M_d}
    \xrightarrowdbl{\quad\mathrm{res}\quad}
    \Emb^=_{C^*(D), M_q\otimes M_d}.
  \end{equation}
  is identifiable with the locally trivial fibration
  \begin{equation}\label{eq:u.homog.sp.fib}
    \begin{tikzpicture}[>=stealth,auto,baseline=(current  bounding  box.center)]
      \path[anchor=base] 
      (0,0) node (l) {$C_{U(qd)}(D)/C_{U(qd)}(M_q)$}
      +(4,.5) node (u) {$U(qd)/C_{U(qd)}(M_q)$}
      +(6,-1) node (r) {$U(qd)/C_{U(qd)}(D)$}
      ;
      \draw[right hook->] (l) to[bend left=6] node[pos=.5,auto] {$\scriptstyle $} (u);
      \draw[->>] (u) to[bend left=6] node[pos=.5,auto] {$\scriptstyle \mathrm{res}$} (r);
    \end{tikzpicture}
  \end{equation}
  (i.e. with the right-hand map in the above diagram).
\end{remark}

\begin{definition}\label{def:unif.thin}
  For an \emph{entourage} $D\subseteq X\times X$, member of a \emph{uniformity} \cite[Definition 7.1]{james_unif_1999} $(X,\cU)$ on $X$, a subset $Y\subseteq X$ is \emph{$D$-thin} if
  \begin{equation*}
    \exists\left(x\in X\right)
    \left(Y\subseteq D_x:=\left\{y\in X\ :\ (x,y)\in D\right\}\right).
  \end{equation*}
  A family $\cY=\left(Y_i\right)_i$ of subsets is $D$-thin if each member thereof is.

  The terminology applies in particular to compact Hausdorff spaces, with their unique \cite[Proposition 8.20]{james_unif_1999} topology-inducing uniformities. 
\end{definition}

\begin{proposition}\label{pr:embs.are.homotopies}
  Let $\cS\xrightarrowdbl{}\bS^3$ be the $M_q$ bundle associated via \Cref{rec:s3t2}\Cref{item:rec:s3t2:cz} to a reduced rational $\theta_{12}=\frac{p}{q}\in \bQ$ and $\bT:=\bD^2\times \bS^1$ one of the two solid tori in \Cref{eq:heeg.dec}.

  Suppose $q|n=qd$. Identifying $M_q$ with the left-hand tensorand $M_q\otimes M_d\cong M_{n}$, such embeddings exist only if the loop
  \begin{equation}\label{eq:zp.shift.diag}
    \bS^1
    \ni z
    \xmapsto{\quad\gamma\quad}
    \Ad_{\diag_{q-1}\left(z^p,1\cdots 1\right)\otimes I_d}
    \in
    \Emb^=_{M_q, M_n}
  \end{equation}
  is nullhomotopic through homotopies $\bS^1\times [0,1]\xrightarrow{h} \Emb^=_{M_q, M_n}$ with the family $\left(\mathrm{res}\; h(\bS^1\times \{s\})\right)_s \subseteq \Emb^=_{C^*(D), M_n}$ arbitrarily thin in the sense of \Cref{def:unif.thin}. 
\end{proposition}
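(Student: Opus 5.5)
The plan is to turn an embedding of $\cS|_{\bT}$ into concrete maps into $\Emb^=_{M_q,M_n}$, and then read off $\gamma$ and its nullhomotopy from the meridian disks of $\bT$.

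\textbf{Step 1: reduce to a map into embedding spaces.} Suppose $\cS|_{\bT}\le\cB$ with $\cB$ a locally trivial subhomogeneous bundle. Near the generic locus $\cB$ must have fibers $M_n$ with $q|n$. By the remark preceding \Cref{not:off.diag}, bundles with connected structure group over the solid torus $\bT\simeq\bS^1$ are trivial, so fix $\cB\cong\bT\times M_n$. Off the core circle $\bS^1_0$, the fibers of $\cS$ are $M_q$, so the embedding becomes a continuous assignment $x\mapsto\iota_x\in\Emb_{M_q,M_n}$, after identifying each fiber $\cS_x$ with $M_q$ via local trivializations. I expect this assignment to land in $\Emb^=$ automatically, since $M_q$ has a single simple factor. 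At a core point $x_0\in\bS^1_0$ the fiber is the abelian $q$-dimensional algebra $C^*(U_1)\cong C^*(D)$, so $\iota_{x_0}\in\Emb_{C^*(D),M_n}$. By continuity of the embedding and of the generators, $\mathrm{res}\,\iota_x\to\iota_{x_0}$ as $x\to x_0$. In particular $\iota_{x_0}$ lies in $\Emb^=_{C^*(D),M_n}$, because multiplicities are locally constant along the limit.

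\textbf{Step 2: identify the meridian loops with $\gamma$.} Fix $x_0\in\bS^1_0$ and the meridian disk $\bD^2\times\{x_0\}$. Its boundary circles of radius $r$ sit inside the slices $\bT^2_t$ for $t=t(r)\to 0$. By \Cref{rec:s3t2}\Cref{item:rec:s3t2:res}, each $\cS|_{\bT^2_t}$ is the twisted bundle $\cA=\cE nd\,\cE$. The key computation is to use the standard presentation of $A^2_\theta$ as sections of a $q\times q$ matrix bundle with clutching $\Ad_{\diag_{q-1}(z^p,1,\dots,1)}$ along the meridian coordinate. This trivializes $\cS$ over the meridian annulus except along one seam. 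In that gauge the generator $U_1$ is the constant $D$, which is why $\mathrm{res}$ is compatible across the seam. Consequently the circle of embeddings read off over a meridian circle, viewed in $\Emb^=_{M_q,M_n}$ with $M_q$ as the left tensorand, is conjugate by a fixed unitary to $\gamma$ from \Cref{eq:zp.shift.diag}. I expect this step to be the main obstacle. It requires checking that the $\bZ/q$ twist of $\cE nd\,\cE$ (first Chern class $\equiv p$ mod $q$, per \Cref{res:bdl.on.s3}) shows up exactly as the shift-diagonal clutching $\diag_{q-1}(z^p,1\cdots1)$, and that the degeneration $A^2_\theta\to C^*\braket{U_1}$ at $t=0$ is compatible with this gauge. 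I would first try to verify this directly from \Cref{eq:cntheta}, writing $U_1,U_2$ as clock/shift-type sections.

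\textbf{Step 3: build the thin nullhomotopy.} Given an entourage, compactness and the convergence in Step 1 give a $\delta>0$ such that, for all radii $r\le\delta$, the $\mathrm{res}$-images of the meridian loops lie in a single small neighborhood $V\ni\iota_{x_0}$. Letting the radius run over $[r_1,\delta]$ gives a homotopy of meridian loops, each conjugate to $\gamma$ by Step 2, whose $\mathrm{res}$-family is thin. To finish the nullhomotopy I would shrink $V$ to be contractible and use local triviality of the fibration \Cref{eq:u.homog.sp.fib} over $V$. This reduces the question to loops in $V\times\bigl(C_{U(n)}(D)/C_{U(n)}(M_q)\bigr)$. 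The disk $\bD^2\times\{x_0\}$ then supplies the contraction of the fiber component: the projection of $\iota$ from the punctured disk to the fiber extends continuously over the puncture in the limit $r\to0$, since near $x_0$ only the $C^*(D)$-data must converge. Projecting the homotopy through the local product structure keeps $\mathrm{res}$ inside $V$, hence within the prescribed entourage. Since the entourage was arbitrary, this gives the claimed arbitrarily thin nullhomotopies of $\gamma$. The same argument applies verbatim to the other solid torus, using $U_2$ in place of $U_1$.
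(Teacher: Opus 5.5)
\textbf{There is a genuine gap in Steps 2--3.} You look for $\gamma$ on a meridian circle, but $\gamma$ lives in the longitudinal direction.

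Over a meridian disk, and indeed over all of $\bD^2\times[0,1]$, the bundle $\cS$ carries no twist. It is a subbundle of the trivial $M_q$ bundle, with fibre $M_q$ generated by $D$ and $\diag_{q-1}(z,1,\dots,1)$ off the centre, and fibre $C^*(D)$ at the centre. The clutching $\Ad_{\diag_{q-1}(z^p,1,\dots,1)}$ is a loop indexed by the meridian coordinate $z$. However, it is applied across a seam that is an entire meridian disk (the two ends of $\bD^2\times[0,1]$), so it is the monodromy picked up by going once around the core. A clutching function does not determine the loop of embeddings over a meridian circle; that loop depends on $\iota$ and on the gauge. The clutching only relates $\iota$ on the two sides of the seam.

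Step 3 then rests on a false claim: the fibre component of $\iota$ in $C_{U(n)}(D)/C_{U(n)}(M_q)$ need \emph{not} extend over the puncture. Your own justification, that only the $C^*(D)$-data must converge, says exactly that this component is unconstrained. Here is a counterexample. Let $\iota$ be the left-tensorand inclusion and $u(\phi)=\diag(e^{i\phi},1,\dots,1)\otimes I_d\in C_{U(n)}(D)$. Set $\iota'_{re^{i\phi}}:=\Ad_{u(\phi)}\circ\iota$ for $r>0$ and $\iota'_0:=\iota|_{C^*(D)}$. Since $\Ad_{u(\phi)}$ fixes $C^*(D)\otimes 1$, this is a continuous embedding of the meridian-disk bundle with constant $\mathrm{res}$. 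Yet its meridian loops represent $(d,0,\dots,0)\neq 0$ in $\pi_1\bigl(U(d)^q/U(d)\bigr)\cong\bZ^q/\bZ(1,\dots,1)$ for $q>1$.

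More bluntly, your argument proves too much. Steps 2--3 use nothing beyond the restriction of the embedding to a single meridian disk. Such restrictions exist with $d=1$, namely the tautological inclusion into $\bD^2\times M_q$. But for $q>1$ the loop $\gamma$ is essential in $\Emb^=_{M_q,M_q}=PU(q)$, since a lift has determinant of degree $p\not\equiv 0\bmod q$. So Steps 2 and 3 cannot both hold.

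\textbf{What is missing is the sweep around the core,} which is how the paper argues. Present $\cS|_{\bT}$ as the pullback of the meridian-disk bundle to $\bD^2\times[0,1]$, with the ends glued by $\Ad_{\diag_{q-1}(z^p,1,\dots,1)}$. An embedding into the (necessarily trivial) $M_{qd}$ bundle is then a path $s\mapsto H(\cdot,s)$ of disk-bundle embeddings. The paper takes this path to run from the left-tensorand $\iota$ to $\iota\circ\Ad_{\diag_{q-1}(z^p,1,\dots,1)}$.

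Restrict to the circle of radius $t$. For each $t>0$ this gives a homotopy in the \emph{core} parameter $s$ from a constant loop to $\gamma$. Each $H_0(\bS^1,s)$ is a single point of $\Emb^=_{C^*(D),M_n}$, so uniform continuity on the compact $\bD^2\times[0,1]$ makes $\bigl(\mathrm{res}\,H_t(\bS^1,s)\bigr)_s$ thin for small $t$.

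Your Step 1 convergence is exactly the ingredient that yields thinness. It must, however, be used uniformly along the whole core circle rather than at one point $x_0$. The homotopy parameter should be $s$, not the radius; the radius only serves to make the $\mathrm{res}$-images small.
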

\begin{proof}
  We work with the solid torus $\bD^2\times \bS^1\cong \bS^3_{\left[0,\frac 12\right]}$ containing $\bS^1_0$ as its core circle $\{0\}\times \bS^1$, with $w:=U_1^q$ as a coordinate along it. The restriction $\cS|_{\bD^2\cong \bD^2\times \{1\}}$ to an individual 2-disk slice embeds into the trivial $M_q$ bundle thereon, with 
  \begin{itemize}[wide]
  \item $M_q$ fiber over the punctured disk $\bD^2_{\times}:=\bD^2\setminus\{0\}$, generated by the $D$ of \Cref{eq:diag.subdiag} and $\diag_{q-1}(z,1\cdots 1)$ for the $z=U_2^q$ coordinate along $\partial \bD^2$;
  \item and $\bC^q$ exceptional fiber at $0\in \bD^2$, generated by the diagonal matrix $D$ alone.
  \end{itemize}
  The bundle $\cS_{\left[0,\frac 12\right]}$ over the entirety of $\bD^2\times \bS^1$ is obtained (cf. \cite[p.299]{rief_canc}) by
  \begin{itemize}[wide]
  \item pulling back the bundle over $\bD^2$ just described to one on the solid cylinder $\bD^2\times [0,1]$ along the first projection $\bD^2\times I\xrightarrowdbl{} \bD^2$;
  \item and identifying the two endpoints of $I$ so as to produce $\bD^2\times \bS^1$, with the gluing effected by the non-trivial automorphism $\Ad_{\diag_{q-1}\left(z^p,1\cdots 1\right)}$ on the restriction to $\bD^2\times \{0\}$. 
  \end{itemize}
    A locally trivial matrix bundle housing $\cS_{\left[0,\frac 12\right]}$, if extant, will be trivial by the remarks preceding \Cref{not:off.diag}; the fiber would thus be $M_{qd}$, $d\in \bZ_{>0}$ for $M_q$ embeds into $M_n$ precisely when $q|n$. In light of the description of $\cS_{\left[0,\frac 12\right]}$ just given, such an embedding exists if and only if the left-tensorand embedding
  \begin{equation}\label{eq:sres.to.mqd} 
    \cS|_{\bD^2}
    \lhook\joinrel\xrightarrow{\quad\iota\quad}
    \bD^2\times M_{qd}
    \cong
    \bD_{\times}^2\times \left(M_{q}\otimes M_d\right)
  \end{equation}
  is homotopic in the space of bundle embeddings \Cref{eq:sres.to.mqd} to $\iota\circ\Ad_{\diag_{q-1}\left(z^p,1\cdots 1\right)}$ with $z\in \bS^1$ being the coordinate around the boundary $\bS^1=\partial\bD^2$.

  Consider such a homotopy $H(-,-)$ with the second coordinate ranging over $s\in [0,1]$, and write $[X,Y]$ for the space of continuous maps $X\to Y$. This induces homotopies $H_t(-,s)$ in $\left[\bS^1,\Emb^=_{C^*(D),M_n}\right]$ at $t=0$ and $\left[\bS^1,\Emb^=_{M_q,M_n}\right]$ at $t\in (0,1]$ on the individual slices
  \begin{equation*}
    \left\{tz\ :\ z\in \bS^1\right\}\subset \bD^2
    ,\quad
    t\in [0,1].
  \end{equation*}
  $H_0\left(\bS^1,s\right)$ being constant for every $s$, the conclusion follows: for $t$ sufficiently close to $0$ the subsets
  \begin{equation*}
    \mathrm{res}\; H_t(\bS^1,s)
    \subseteq
    \Emb^=_{C^*(D),M_n}
  \end{equation*}
  will be uniformly small in $s\in [0,1]$. 
\end{proof}

We record a consequence, weaker than \Cref{th:s.not.ext} and to be superseded by the latter later. 

\begin{corollary}\label{cor:q2.div.n}
Let $\theta_{12}=\frac{p}{q}\in \bQ$ be a lowest-terms rational.
  
  The subhomogeneous $C^*$ bundle $\cS\xrightarrowdbl{} \bS^3$ of \Cref{rec:s3t2}\Cref{item:rec:s3t2:cz} associated to the quantum sphere $\bS^3_{\theta}$ embeds into a locally trivial $M_n$ bundle over either of the two solid tori $\bD^2\times \bS^1$ in the decomposition \Cref{eq:heeg.dec} only if $q^2|n$.
\end{corollary}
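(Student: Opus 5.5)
The plan is to forget the thinness refinement in \Cref{pr:embs.are.homotopies} and use only its coarsest consequence: if an embedding into a locally trivial $M_n$ bundle exists, then $q|n$, say $n=qd$, and the loop $\gamma$ of \Cref{eq:zp.shift.diag} is nullhomotopic in $\Emb^=_{M_q,M_n}$. I will then compute the class of $\gamma$ in $\pi_1$ of that space and show it vanishes only when $q|d$, i.e. $q^2|n$.

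\emph{Step 1: the space.} By \Cref{re:emb.on.emb.fib}, $\Emb^=_{M_q,M_n}$ is the homogeneous space $U(qd)/C_{U(qd)}(M_q)$. The centralizer of $M_q\otimes I_d$ is $I_q\otimes U(d)\cong U(d)$, and it contains the scalars. The fibration $U(d)\to U(qd)\to U(qd)/U(d)$ with connected fiber gives an exact sequence
\begin{equation*}
  \pi_1 U(d)\cong\bZ\xrightarrow{\quad}\pi_1 U(qd)\cong\bZ\xrightarrow{\quad}\pi_1\left(U(qd)/U(d)\right)\xrightarrow{\quad}\pi_0U(d)=0 .
\end{equation*}
Both $\pi_1$ groups are read off by the winding number of the determinant. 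Since $\det(I_q\otimes u)=(\det u)^q$, the left map is multiplication by $q$. Hence $\pi_1\left(\Emb^=_{M_q,M_n}\right)\cong\bZ/q$, detected by the winding of $\det$ of any lift to $U(qd)$, taken mod $q$.

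\emph{Step 2: the class of $\gamma$.} The loop $\gamma$ has the explicit continuous lift $z\mapsto W_z:=\diag_{q-1}\left(z^p,1\cdots 1\right)\otimes I_d$ to $U(qd)$. This lift is a genuine loop in $U(qd)$, so no ambiguity arises from lifting to $U(qd)$ versus $PU(qd)$. Up to a constant sign coming from the cyclic permutation, $\det W_z=\left(z^p\right)^d$, so $\det W_z$ winds $pd$ times. Thus $[\gamma]=pd \bmod q$.

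\emph{Step 3: conclusion.} Nullhomotopy of $\gamma$ forces $q|pd$. Since $\gcd(p,q)=1$, this gives $q|d$ and hence $q^2|qd=n$. The argument is the same for either solid torus, since the two are interchanged by the symmetry $U_1\leftrightarrow U_2$ (i.e.\ $\theta\mapsto-\theta$, $p\mapsto -p$). The main point needing care is Step 1: one must be sure that the embedding-space loop is genuinely represented by the unitary loop $W_z$, and that the scalar ambiguity in $\Ad$ is absorbed by $C_{U(qd)}(M_q)\ni$ scalars, so that the mod-$q$ determinant winding is a well-defined invariant. With the identification of \Cref{re:emb.on.emb.fib} this is routine. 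The thinness clause of \Cref{pr:embs.are.homotopies} is not needed here; it is presumably what the sharper \Cref{th:s.not.ext} exploits.
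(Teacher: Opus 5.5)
Your proposal is correct and follows the paper's own proof essentially step for step. Both arguments use only the coarse consequence of \Cref{pr:embs.are.homotopies} that the loop \Cref{eq:zp.shift.diag} is nullhomotopic in $\Emb^=_{M_q,M_n}\cong U(qd)/U(d)$, compute $\pi_1$ of that space as $\bZ/q$ via the determinant (with $u\mapsto I_q\otimes u$ inducing multiplication by $q$), identify the loop's class with $pd$, and conclude $q|d$ from $\gcd(p,q)=1$; your remarks on the closed unitary lift $W_z$ and the sign from the cyclic shift just make explicit what the paper leaves implicit.
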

\begin{proof}
  Having conflated $M_q$ with the left-hand tensorand in $M_q\otimes M_d\cong M_{qd}$, we have an identification
  \begin{equation*}
    U(qd)/U(d)
    \ni
    \psi
    \xmapsto{\quad\cong\quad}
    \Ad_{\psi}|_{M_q}
    \in
    \Emb^=_{M_q,M_q\otimes M_d}
  \end{equation*}
  for the realization of the unitary group $U(d)$ as the unitary \emph{commutant} (or \emph{centralizer})
  \begin{equation}\label{eq:untr.cntrlz}
    \begin{aligned}
      C_{U(qd)}(M_q)
      &=
        \left\{u\in U(qd)\ :\ \Ad_u T=T,\ \forall T\in M_q\right\}\\
      &=
        1\otimes U(d)
        \ \le\ 
        U(q)\times U(d)
        \ \subset\ 
        M_q\otimes M_d
    \end{aligned}      
  \end{equation}
  of $M_q$ in $M_{qd}$. By \Cref{pr:embs.are.homotopies}, the sought-after embedding exists only when the loop (represented by) $\Ad_{\diag_{q-1}\left(z^p,1\cdots 1\right)}$ is trivial in $\pi_1 U(qd)/U(d)$. The long exact \emph{homotopy sequence} \cite[\S 17.3]{steen_fib_1999} attached to
  \begin{equation*}
    \{1\}
    \to
    SU(n)
    \lhook\joinrel\xrightarrow{\quad}
    U(n)
    \xrightarrowdbl{\quad\det\quad}
    \bS^1
    \to
    \{1\}
    ,\quad
    n\in \bZ_{>0}
  \end{equation*}
  identifies $\pi_1 U(n)$ with $\bZ$ with $\det$ inducing a $\pi_1$ isomorphism, which observation further applied in the long exact sequence associated to the principal $U(d)$-bundle $U(qd)\xrightarrowdbl{} U(qd)/U(d)$ gives $\pi_1 U(qd)/U(d)\cong \bZ/q$ so as to identify
  \begin{equation}\label{eq:ad.zp.1}
    \Ad_{\diag_{q-1}\left(z^p,1\cdots 1\right)}
    =
    \Ad_{\diag_{q-1}\left(z^p,1\cdots 1\right)\otimes I_d}
  \end{equation}
  with $pd$. By the assumed coprimality $\gcd(p,q)=1$, homotopic triviality is equivalent to $q|d=\frac{n}{q}$.
\end{proof}

\Cref{pr:embs.are.homotopies} can now be leveraged into a sharper equivalence criterion.

\begin{theorem}\label{th:map.through.ct.on.d}
  Let $\cS\xrightarrowdbl{}\bS^3$ be the $M_q$ bundle associated via \Cref{rec:s3t2}\Cref{item:rec:s3t2:cz} to a reduced rational $\theta_{12}=\frac{p}{q}\in \bQ$ and $\bT:=\bD^2\times \bS^1$ one of the two solid tori in \Cref{eq:heeg.dec}.

  Suppose $q|n=qd$. Identifying $M_q$ with the left-hand tensorand $M_q\otimes M_d\cong M_{n}$, such embeddings exist only if \Cref{eq:zp.shift.diag} is nullhomotopic through loops constant on $D$.

  Equivalently, the condition is that \Cref{eq:zp.shift.diag} be nullhomotopic in the fiber
  \begin{equation*}
    C_{U(qd)}(D)/C_{U(qd)}(M_q)
    \cong
    U(d)^q/U(d)
  \end{equation*}
  of the fibration \Cref{eq:u.homog.sp.fib} that contains it. 
\end{theorem}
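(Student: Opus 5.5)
The plan is to start from \Cref{pr:embs.are.homotopies}. If an embedding exists, the loop $\gamma$ of \Cref{eq:zp.shift.diag} is nullhomotopic in $\Emb^=_{M_q,M_n}$ through a homotopy $h$ whose restricted family $\left(\mathrm{res}\; h(\bS^1\times\{s\})\right)_s$ is $\cU$-thin for any prescribed entourage $\cU$. I will convert this thinness into an honest nullhomotopy inside a single fiber of the fibration \Cref{eq:u.homog.sp.fib}. Note first that $\gamma$ itself already lies in one fiber. Indeed, $\Ad_{\diag_{q-1}(z^p,1\cdots 1)\otimes I_d}$ fixes $D$, because conjugating the diagonal matrix $D$ by a monomial shift matrix only permutes its eigenvalues. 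But $\Ad$ by the cyclic shift sends $D$ to $\zeta^{\pm1}D$, which generates the same $C^*(D)$. So $\mathrm{res}\circ\gamma$ is constant at the point $e_0:=[C^*(D)\otimes 1]$. Identifying $C_{U(qd)}(D)=U(d)^q$ (block diagonal with respect to the eigenspaces of $D\otimes I_d$) and $C_{U(qd)}(M_q)=1\otimes U(d)$ gives the fiber description $U(d)^q/U(d)$, with $U(d)$ embedded diagonally. This handles the ``equivalently'' clause once the main claim is established.

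For the main claim, I will use the fact that the base $B:=\Emb^=_{C^*(D),M_n}\cong U(qd)/U(d)^q$ is a compact homogeneous manifold. Hence $e_0$ has a neighbourhood $V$ that is contractible to $e_0$, over which the fibration \Cref{eq:u.homog.sp.fib} is trivial via a local section $\sigma\colon V\to U(qd)$ with $\sigma(e_0)=1$. Choose the entourage $\cU$ so that every $\cU$-thin set meeting a small neighbourhood of $e_0$ lies in $V$; since $\mathrm{res}\,h(\bS^1\times\{0\})=\{e_0\}$ and the family is thin, a connectedness argument in $s$ then places the whole image $\mathrm{res}\,h(\bS^1\times[0,1])$ inside $V$. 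The homotopy can then be projected into the fiber by setting
\begin{equation*}
  h'(z,s):=\Ad_{\sigma(\mathrm{res}\,h(z,s))^{-1}}\circ h(z,s).
\end{equation*}
This map restricts on $C^*(D)$ to $e_0$, so it lands in $\mathrm{res}^{-1}(e_0)=C_{U(qd)}(D)/C_{U(qd)}(M_q)$. It agrees with $\gamma$ at $s=0$ because $\sigma(e_0)=1$, and at $s=1$ it is the image of a constant loop under a map depending continuously on a parameter, hence a null loop. The result is a nullhomotopy of $\gamma$ through embeddings constant on $D$, as required.

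The main obstacle is the thinness-to-containment step. \Cref{def:unif.thin} only says each individual set $\mathrm{res}\,h(\bS^1\times\{s\})$ is small, not that all of them sit near $e_0$, so the family may drift away as $s$ varies. If it does, I will replace the fixed section $\sigma$ by a moving basepoint. Take $b(s):=\mathrm{res}\,h(1,s)$, a path in $B$ starting at $e_0$, and lift it via the fibration to a path $g(s)\in U(qd)$ with $g(0)=1$. Then run the argument above relative to $b(s)$, using local sections $\sigma_{b(s)}$ that are uniformly defined by compactness and homogeneity. This amounts to applying $\Ad_{g(s)^{-1}}$ before projecting. At $s=1$ the result is again a constant loop conjugated into the fiber, so it stays nullhomotopic there. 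Both endpoints then lie in the fiber over $e_0$, which yields the claimed fiberwise nullhomotopy.
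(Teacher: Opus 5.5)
Your argument is correct, provided the moving-basepoint construction of your last paragraph is the proof rather than a contingency. The ``connectedness argument in $s$'' of your second paragraph is false: thinness bounds only the size of each slice $\mathrm{res}\,h(\bS^1\times\{s\})$, and nothing stops the path $b(s)=\mathrm{res}\,h(1,s)$ from leaving $V$.

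Lift $b$ to $g(s)\in U(qd)$ with $g(0)=1$, and use a $U(qd)$-invariant metric on $\Emb^=_{C^*(D),M_n}$. Then the map $k(z,s):=\Ad_{g(s)^{-1}}\circ h(z,s)$ satisfies $\mathrm{res}\,k(1,s)=e_0$, and for a small enough entourage every slice $\mathrm{res}\,k(\bS^1\times\{s\})$ stays inside $V$. Hence $(z,s)\mapsto\Ad_{\sigma(\mathrm{res}\,k(z,s))^{-1}}\circ k(z,s)$ is a homotopy inside $\mathrm{res}^{-1}(e_0)$ from $\gamma$ to a constant loop. The same translation by $g(s)$, with no projection needed, is also what makes the two formulations in the statement equivalent.

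One correction: $\Ad_{\diag_{q-1}(z^p,1\cdots 1)}$ does not fix $D$; it sends it to $\zeta^{-1}D$. Points of $\Emb^=_{C^*(D),M_n}$ are embeddings, not their images: the isotropy group in \Cref{eq:u.homog.sp.fib} is the centralizer $C_{U(qd)}(D)$, not the normalizer. So $\mathrm{res}\circ\gamma$ is constant at $D\mapsto\zeta^{-1}D\otimes I_d$ rather than at the standard embedding. This is harmless here if you base the orbit map and $\sigma$ at that point. It is, however, why the paper, in the next proof, multiplies by $\diag_1(1\cdots 1)\otimes I_d$ to move $\gamma$ into the copy of $U(d)^q/U(d)$ over the basepoint.

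Your route differs from the paper's in order and in how the projection into a fiber is done. The paper projects first. It puts a Riemannian metric on the total space of \Cref{eq:u.homog.sp.fib} and uses thinness plus a closest-point (normal-geodesic) theorem to deform each $\gamma_s$ into the fiber over $\mathrm{res}\,\gamma_s(1)$. Only then does it trivialize the pullback of the fibration along the resulting path in the base.

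You transport first: your lift $g(s)$ is an explicit trivialization of that pullback. You then project with a single local section at $e_0$, using $U(qd)$-equivariance so that one local trivialization serves for all $s$. This is more elementary: it needs only local triviality of $U(qd)\to U(qd)/C_{U(qd)}(D)$ and an invariant metric, with no tubular-neighbourhood or unique-closest-point input. The paper's projection, by contrast, is intrinsic to the fibration and does not use the group action.
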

\begin{proof}
  That the two formulations are mutually equivalent is tautological. 
  \begin{enumerate}[label={},wide]
  \item\textbf{($\Leftarrow$)} The \emph{homotopy lifting property} characteristic \cite[\S 5.5.5]{td_alg-top} of fibrations ensures the existence of
    \begin{equation*}
      \bS^1\times [0,1]
      \xrightarrow{\quad h\quad}
      U(qd)/C_{U(qd)}(M_q)
      ,\quad
      \begin{gathered}
        h|_{\bS^1\times\{0\}}=\Ad_{\diag_{q-1}\left(z^p,1\cdots 1\right)\otimes I_d}\\
        h|_{\bS^1\times\{1\}}=1\\
        \forall t\left(\mathrm{res}\; h|_{\bS^1\times\{t\}}\text{ is constant}\right).
      \end{gathered}
    \end{equation*}
    To conclude, collect the slices $\Ad_{h|_{\bS^1\times\{t\}}}$, $t\in [0,1]$ into a single embedding \Cref{eq:sres.to.mqd} with $\Ad_{h|_{\bS^1\times\{t\}}}$ operating respectively along the circle $(1-t)\bS^1$.

  \item\textbf{($\Rightarrow$)} Equip the (compact) total space of \Cref{eq:u.homog.sp.fib} with a Riemannian structure, providing the corresponding \emph{geodesic distance} \cite[Definition 7.2.4]{dcrm_riem_1992}. We know from \Cref{pr:embs.are.homotopies} that the loops $\gamma_s$, $s\in [0,1]$ through which \Cref{eq:zp.shift.diag} nullhomotopes can be selected so as to have
    \begin{equation*}
      \forall\left(s\in [0,1]\right)
      \left(\mathrm{diam}\gamma_{s}\left(\bS^1\right)(D)<\varepsilon\right)
    \end{equation*}
    for arbitrarily small $\varepsilon$. Were it small enough (and given the compactness of fibers and base alike in \Cref{eq:u.homog.sp.fib}), \cite[Theorem 2.7.12]{bg_diff-geom_1e_en_1988} ensures the existence of unique, smooth-varying curves
    \begin{equation*}
      [0,1]
      \xrightarrow{\quad\alpha_{z,s}\quad}
      U(qd)/C_{U(qd)}(M_q)
      ,\quad
      \begin{aligned}
        \alpha_{z,s}(0)
        &=
          \gamma_s(z)\\
        \alpha_{z,s}(1)
        &\in
          \mathrm{res}^{-1}\left(\gamma_s(1)|_{C^*(D)}\right)
      \end{aligned}
    \end{equation*}
    their respective origins to their corresponding unique closest points in the fibers of \Cref{eq:u.homog.sp.fib} just displayed. Flowing along the $\alpha_{z,s}$ will now implement a homotopy from the original $\left(\gamma_s\right)_s$ into another, $\left(\wt{\gamma}_s\right)_s$ say, with the individual loops $\wt{\gamma}_s(\bS^1)$ entirely contained in individual fibers. Trivializing the pullback of the bundle \Cref{eq:u.homog.sp.fib} through the path
    \begin{equation*}
      \mathrm{res}\left(\wt{\gamma}_s(\bS^1)\right)_s
      \in
      \left[[0,1],U(qd)/C_{U(qd)}(D)\right]      
    \end{equation*}
    in the base, the existence of such a nullhomotopy is indeed equivalent to the triviality of the original loop in the fiber. 
  \end{enumerate}
\end{proof}

\pf{th:s.not.ext}
\begin{th:s.not.ext}
  Embeddability into locally trivial subhomogeneous bundles on the one hand and matrix bundles on the other are equivalent \cite[Proposition 2.3]{2605.10752v1}, so we assume an $M_{n=qd}$ fiber throughout. 
  
  Naturally, one implication needs no elaboration; for the other, \Cref{th:map.through.ct.on.d} reduces the problem to showing that as soon as $q>1$ (i.e. $\theta_{12}\not\in \bZ$) \Cref{eq:zp.shift.diag} cannot be nullhomotopic in the fiber
  \begin{equation*}
    C_{U(qd)}(D)/C_{U(qd)}(M_q)
    \cong
    U(d)^q/U(d)
  \end{equation*}
  (we may as well assume $n=qd$ is divisible by $q$). The portion
  \begin{equation*}
    \cdots
    \xrightarrow{\quad}
    \pi_1\; U(d)
    \xrightarrow{\quad}
    \pi_1\; U(d)^q
    \xrightarrow{\quad}
    \pi_1\; U(d)
    \xrightarrow{\quad}
    \pi_0\cdots
  \end{equation*}
  of the homotopy sequence attached to $C_{U(qd)}(D)\cong U(d)^q$ regarded as a principal $\left(C_{U(qd)}(M_q)\cong U(d)\right)$-bundle (together with the fact that $\det$ induces a $\pi_1$-isomorphism $U(\bullet)\to \bS^1$) shows that the aforementioned nullhomotopy amounts precisely to \Cref{eq:zp.shift.diag} operating with distinct determinants on the $q>1$ eigenspaces of $D\otimes I_d\in M_q\otimes M_d$ after translation to the base fiber $U(d)^q/U(d)$ above the basepoint
  \begin{equation*}
    U(d)^q
    \in
    U(dq)/U(d)^q
    \cong
    U(dq)/C_{U(dq)}(D).
  \end{equation*}
  Indeed:
  \begin{equation*}
    \left(\diag_{1}(1\cdots 1)\otimes I_d\right)
    \cdot
    \left(\diag_{q-1}\left(z^p,1\cdots 1\right)\otimes I_d\right)
    =
    \diag_{0}\left(z^p,1\cdots 1\right)\otimes I_d
    \in
    C_{U(qd)}(D),
  \end{equation*}
  with respective determinants $z^{pd}$ on one eigenspace and 1 on every other. 
\end{th:s.not.ext}

As the torus bundles of concern in \Cref{th:s.not.ext} are obtained by restricting the sphere-based ones of \Cref{rec:s3t2}, an immediate consequence of the theorem is its analogue for quantum 3-spheres.

\begin{corollary}\label{cor:s3.theta}
  Let $\theta_{12}=\frac{p}{q}\in \bQ$ be a lowest-terms rational.
  
  The subhomogeneous $C^*$ bundle $\cS\xrightarrowdbl{} \bS^3$ of \Cref{rec:s3t2}\Cref{item:rec:s3t2:cz} associated to the quantum sphere $\bS^3_{\theta}$ embeds into a locally trivial subhomogeneous $C^*$ bundle if and only if $q=1$.  \qedhere
\end{corollary}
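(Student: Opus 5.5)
The plan is to deduce \Cref{cor:s3.theta} directly from \Cref{th:s.not.ext} by restriction, treating the two implications separately.

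\textbf{($\Leftarrow$)} Suppose $q=1$. Then $\theta_{12}=p\in\bZ$ and all $U_i$ commute, so $C^2_{\theta}$ is the commutative algebra $C(\bS^3)$. Its center is everything, and $\cS$ is the trivial line bundle $\bS^3\times\bC$, which is already locally trivial. Equivalently, \Cref{rec:s3t2}\Cref{item:rec:s3t2:sing} says the singular fibers have dimension $q=1$, which equals the generic fiber dimension $q^2=1$. So there are no singular fibers, and $\cS$ is homogeneous of fiber $M_1$.

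\textbf{($\Rightarrow$)} Suppose $\cS\lhook\joinrel\xrightarrow{}\cB$ is an embedding into a locally trivial subhomogeneous $C^*$ bundle $\cB\xrightarrowdbl{}\bS^3$. Take either solid torus $\bT=\bD^2\times\bS^1$ of \Cref{eq:heeg.dec}, for instance $\bS^3_{[0,\frac12]}$. Restricting along the closed inclusion $\bT\subset\bS^3$ does three things:
\begin{itemize}[wide]
\item it preserves local triviality, since trivializing neighborhoods of $\bS^3$ intersect $\bT$ in trivializing neighborhoods;
\item it preserves subhomogeneity, since the fibers are unchanged;
\item it preserves fiberwise injectivity of the embedding.
\end{itemize}
Hence $\cS|_{\bT}\lhook\joinrel\xrightarrow{}\cB|_{\bT}$ is an embedding of the restricted quantum-sphere bundle into a locally trivial subhomogeneous bundle over the solid torus. \Cref{th:s.not.ext} then forces $q=1$.

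The one point that needs checking is that "embedding of bundles" in the paper's sense is fiberwise, i.e. given by a continuous map of total spaces that is a unital $C^*$-embedding on each fiber. If instead it is phrased via section algebras $\Gamma(\cS)\le\Gamma(\cB)$ as $C(\bS^3)$-algebras, we pass to quotients by the ideal of functions vanishing on $\bT$. Local triviality of $\cB$ ensures $\Gamma(\cB)\to\Gamma(\cB|_{\bT})$ is surjective with the expected kernel, by Tietze extension over trivializing charts and a partition of unity. Injectivity of the induced map $\Gamma(\cS|_{\bT})\to\Gamma(\cB|_{\bT})$ then follows from injectivity on each fiber $\cS_x\to\cB_x$, which holds because a $C(X)$-linear embedding of continuous bundles is isometric fiberwise. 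This is routine, so essentially all the content lies in \Cref{th:s.not.ext}.
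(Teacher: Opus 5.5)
Your proposal is correct and follows the paper's own argument: it deduces the corollary from \Cref{th:s.not.ext} by restricting a hypothetical locally trivial ambient bundle over $\bS^3$ to one of the Heegaard solid tori, and it settles the case $q=1$ trivially because $C^2_{\theta}$ is then commutative. Your extra check that a $C(\bS^3)$-linear embedding restricts to fiberwise (indeed isometric) embeddings is a routine detail the paper leaves implicit.
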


\pf{th:sn.theta}
\begin{th:sn.theta}
  That the inclusion $Z(C^n_{\theta})=:Z^n_{\theta}\le C^n_{\theta}$ is dual to a continuous subhomogeneous $C^*$ field $\cS\xrightarrowdbl{}X$ follows from \Cref{eq:a.is.end.e} (the analogous claim for tori) together with \Cref{eq:cntheta}. The finite-type claim follows (as argued in \cite[proof of Theorem 3.3]{MR5049282}, say) from \cite[Proposition 2.1]{zbMATH03635101} and the fact that the strata are metrizable, finite unions of path-connected spaces, and have a uniform bound on their \emph{covering dimensions} \cite[Definition 8-1]{nk_dim}.

  The generic fiber $\cS_x$ being a matrix algebra (because this is so for quantum tori), the tracial expectation will indeed be unique if it exists. That it does exist follows from the realization \Cref{eq:cntheta}, which makes it clear that the usual normalized traces on the individual generic fibers will glue compatibly across singular loci. The fact that \Cref{eq:K.ct} is optimal is part of \cite[Theorem 4.1]{bg_cx-exp}, and in the present case that value is $q$ for the common dimension $q\times q$ of the generic matrix-algebra fibers.

  It remains to address the statement's item \Cref{item:th:sn.theta:not.emb}, which reduces to the case $n=2$: for any individual $\theta_{ij}$ the quotient
  \begin{equation*}
    C^n_{\theta}
    \xrightarrowdbl{\quad}
    C\left(\bS^3_{\theta_{ij}}\right)
  \end{equation*}
  obtained by annihilating all but the two generators $T_{i,j}$ (where a slight notational abuse substitutes the single number $\theta_{ij}$ for the corresponding skew-symmetric $2\times 2$ matrix) is dual to a restriction of $\cS\xrightarrowdbl{}X$ to an $\bS^3\subseteq X$. As to quantum 3-spheres, the embeddability issue is addressed by \Cref{cor:s3.theta}. 
\end{th:sn.theta}

%%%%%%%%%%%%%%%%%%%%%%%%%%%%%%%%
%%%%%%%%%%%%%%%%%%%%%%%%%%%%%%%%

\addcontentsline{toc}{section}{References}
%\bibliography{bib}{}
%\bibliographystyle{plain}

% BEGIN INSERTED BBL (irremovable-bundle-singularities-xv1.bbl)
\def\polhk#1{\setbox0=\hbox{#1}{\ooalign{\hidewidth
  \lower1.5ex\hbox{`}\hidewidth\crcr\unhbox0}}}

% END INSERTED BBL

\Addresses

\end{document}